\newcommand*{\rom}[1]{\expandafter\@slowromancap\romannumeral #1@}
\theoremstyle{definition}
\newtheorem{fact}{fact}
\newtheorem{thm}[fact]{Theorem}
\newtheorem{lemma}[fact]{Lemma}
\newtheorem{defini}[fact]{Definition}
\title{A Note on Always Decidable Propositional Forms}
\author{Merlin Carl}
\begin{document}

\maketitle

\begin{abstract}
We ask the following question: If all instantiations of a propositional formula $A(x_1,...,x_n)$ in $n$ propositional variables are decidable in some sufficiently strong recursive theory, does it follow
that $A$ is tautological or contradictory? and answer it in the affirmative. We also consider the following related question: Suppose that for some propositional formula $A(x_1,...,x_n)$, there is
a Turing program $P$ such that $P([\phi_{1}],...,[\phi_{n}])\downarrow=1$ iff $\mathbb{N}\models A(\phi_{1},...,\phi_{n})$ and otherwise $P([\phi_{1}],...,[\phi_{n}])\downarrow=0$ (where $[\phi]$ denotes the 
G\"odel number of $\phi$), does it follow
that the truth value of $A(\phi_{1},...,\phi_{n})$ is independent of $\phi_1,...,\phi_{n}$ and hence that $A$ is tautological or contradictory?
\end{abstract}


\section{Decidability in PA and related systems}

\begin{defini}
 Let $T$ be a theory. A propositional formula $A(x_1,...,x_n)$ is always decicable in $T$ iff $T$ decides every sentence of the form
$A(\phi_1,...,\phi_n)$, where $\phi_1,...,\phi_n$ are closed formulas (without free variables) in the language of $T$.
\end{defini}

We formulate our claims for the case of $PA$, but they can be transfered to arbitrary recursive axiom systems that allow G\"odel coding.

\begin{lemma}{\label{contradictoryindependentformulas}}
 For each $n\in\omega$, there is a set of $n$ mutually exclusive non-refutable formulas, i.e. a set $\{\theta_1,...,\theta_{n}\}$ of closed $\mathcal{L}_{PA}$ formulas such that no $\neg\theta_{i}$ is provable in $PA$
and such that $\theta_{i}\rightarrow\bigwedge_{j=1,j\neq i}^{n}\neg\theta_{j}$ is provable in $PA$ for $i\in\{1,...,n\}$.
\end{lemma}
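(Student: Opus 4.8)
The plan is to give an explicit construction by means of iterated consistency statements, applying G\"odel's second incompleteness theorem not only to $PA$ but to the consistent, recursively axiomatized extensions it generates. First I would build a finite tower of theories: set $T_0:=PA$ and $T_{k+1}:=T_k+\mathrm{Con}(T_k)$, where $\mathrm{Con}(T)$ is the canonical $\Pi_1$ sentence expressing consistency via a $\Sigma_1$ provability predicate. Writing $\gamma_0:=(0=0)$ and $\gamma_k:=\mathrm{Con}(T_{k-1})$ for $k\geq 1$, the structural fact I need is that $PA\vdash\gamma_{k+1}\to\gamma_k$: since $T_{k-1}\subseteq T_k$, any derivation of a contradiction from $T_{k-1}$ is one from $T_k$, and this implication is formalizable in $PA$. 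Hence $\gamma_0,\gamma_1,\gamma_2,\dots$ is a provably decreasing chain of sentences.

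For $i\in\{1,\dots,n\}$ I would then define $\theta_i:=\gamma_{i-1}\wedge\neg\gamma_i$, the sentence asserting that the chain ``drops'' exactly at stage $i$. Mutual exclusivity is a purely propositional consequence of the chain being decreasing: if $i<j$ then $j-1\geq i$, so $PA\vdash\gamma_{j-1}\to\gamma_i$ by transitivity of the implications above; since $\theta_j$ contains the conjunct $\gamma_{j-1}$ and $\theta_i$ contains $\neg\gamma_i$, the conjunction $\theta_i\wedge\theta_j$ provably yields $\gamma_i\wedge\neg\gamma_i$. Thus $PA\vdash\theta_i\to\neg\theta_j$ for all $j\neq i$, which is exactly $PA\vdash\theta_i\to\bigwedge_{j=1,\,j\neq i}^{n}\neg\theta_j$.

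The substantive part is non-refutability, i.e. $PA\nvdash\neg\theta_i$, equivalently the consistency of $PA+\gamma_{i-1}+\neg\gamma_i$. Here I would use that $\gamma_{i-1}$ provably implies every lower $\gamma_k$, so that the theory $PA+\gamma_{i-1}$ is deductively identical to $T_{i-1}$, while $\gamma_i=\mathrm{Con}(T_{i-1})$ by definition. The claim then reduces to the consistency of $T_{i-1}+\neg\mathrm{Con}(T_{i-1})$, which is precisely what G\"odel's second incompleteness theorem guarantees for any consistent recursively axiomatized extension of $PA$, and in particular for $T_{i-1}$.

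The hard part, and the only point where anything beyond bare consistency of $PA$ is used, is verifying that each theory $T_{i-1}$ in the finite tower is itself consistent, since this is the hypothesis under which the previous step invokes G\"odel's theorem. I would settle it by an induction showing that every $\gamma_k$ is true in the standard model $\N$, so that each $T_k$ is sound and hence consistent; this rests on the arithmetic soundness of $PA$, which is the standing assumption implicit in speaking of $\N$. With the tower consistent, G\"odel's second theorem applies at every level and the construction is complete. As a more abstract alternative one may instead note that the Lindenbaum--Tarski algebra of $PA$ is atomless, hence contains, for each $n$, a family of $n$ pairwise disjoint nonzero elements whose representatives satisfy both required conditions; I would nonetheless favour the tower construction, as it exhibits the sentences $\theta_1,\dots,\theta_n$ concretely.
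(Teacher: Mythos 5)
Your proof is correct, but it takes a genuinely different route from the paper. The paper applies the G\"odel fixpoint theorem for $n$-tuples to produce sentences $\theta_i$ asserting, Rosser-style, that the least proof of $\neg\theta_i$ precedes the least proof of every other $\neg\theta_j$; mutual exclusivity and non-refutability are then extracted from this self-referential ordering, and the whole argument needs only the consistency of $PA$. Your construction instead builds the finite tower $T_0=PA$, $T_{k+1}=T_k+\mathrm{Con}(T_k)$ and takes $\theta_i:=\gamma_{i-1}\wedge\neg\gamma_i$, locating the first failure of iterated consistency. This buys you two simplifications: you avoid the $n$-ary fixpoint theorem entirely (only the off-the-shelf second incompleteness theorem is used, applied to each $T_{i-1}$), and mutual exclusivity becomes a purely propositional consequence of the provably decreasing chain $\gamma_0\leftarrow\gamma_1\leftarrow\cdots$. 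The price is a strictly stronger metatheoretic hypothesis: to invoke G\"odel's second theorem at level $i$ you must first know that $T_{i-1}$ is consistent, and mere consistency of $PA$ does not give this --- a consistent theory can prove its own inconsistency, so $T_1=PA+\mathrm{Con}(PA)$ could a priori be inconsistent if $PA$ were consistent but unsound. You resolve this by appealing to soundness of $PA$ over $\N$, and you flag the dependence honestly; since the paper elsewhere assumes only consistency (cf.\ Lemma \ref{allcombinationsconsistent}) this is a real, if mild, weakening of the result, though it is harmless in the context of Section 2, where truth in $\N$ is used freely. Your closing observation that the lemma also follows abstractly from the atomlessness of the Lindenbaum--Tarski algebra of $PA$ is accurate, and that atomlessness is itself usually proved by exactly the kind of Rosser-style argument the paper uses, which is a good reason to prefer your explicit tower if one wants to avoid reproving it.
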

\begin{proof}
 We write $\phi<_{p}\psi$ for the statement `There is a $PA$-proof of $\neg\phi$ and the smallest G\"odel number $n$ of such a proof is smaller than the smallest G\"odel number of a proof of $\neg\psi$, provided there is one',
i.e. $\exists{x}(\text{Bew}(x,[\neg\phi])\wedge\forall{y<x}\neg\text{Bew}(y,\neg\psi))$, where $\text{Bew}(a,b)$ denotes `$a$ is the G\"odel number of a proof of the closed formula with G\"odel number $b$'.
Consider the following system of statements (we confuse formulas with their G\"odel numbers):\\
(1) $\bigwedge_{i=2}^{n}z_{1}<_{p}z_{i}$\\
(2) $\bigwedge_{i=1,i\neq 2}^{n}z_{2}<_{p}z_{i}$\\
...
(n) $\bigwedge_{i=1,i\neq n}^{n}z_{n}<_{p}z_{i}$\\
Applying the G\"odel fixpoint theorem generalized to $n$-tuples of formulas (see e.g. \cite{H}), we get statements $\theta_1,...,\theta_n$ such that
\begin{center}
 (*) $\theta_{i}\leftrightarrow\bigwedge_{j=1,j\neq i}^{n}\theta_{i}<_{p}\theta_{j}$
\end{center}
is provable in $PA$ for each $i\in\{1,2,..,n\}$. We claim that $\{\theta_{1},...,\theta_{n}\}$ is as desired.\\
First, if $\theta_{i}$ and $\theta_{j}$ are both true (where $i\neq j$), then there are by (*) (G\"odel numbers of) proofs $\beta_i$ for $\theta_{i}$ and $\beta_{j}$ for $\theta_{j}$.
Now, again by (*), we have $\beta_{i}<\beta_{j}$ and $\beta_{j}<\beta_{i}$, which is impossible. Hence $\theta_{i}$ implies $\neg\theta_{j}$ for all $j\neq i$. This argument can
easily be carried out in $PA$.\\
Second, suppose that $\neg\theta_{i}$ is provable in $PA$ for some $i\in\{1,...,n\}$.
 If $\neg\theta_{i}$ is provable, then there is $j\in\{1,...,n\}$ such that $\neg\theta_{j}$ is provable
and the minimal G\"odel number of a proof of $\neg\theta_{j}$ is minimal among the minimal G\"odel numbers of proofs of $\neg\theta_{k}$ for $k\in\{1,...,n\}$. Let $\beta$ be the minimal G\"odel number
of a proof of $\neg\theta_{j}$. Then $PA$ proves $\neg\theta_{j}$. Moroever, it is easily provable in $PA$ that no $k^{\prime}<k$ is a proof for any of the $\theta_{l}$, $l\in\{1,...,n\}$.
Hence, by (*), $PA$ proves $\theta_{j}$, so $PA$ proves $\theta_{j}\wedge\neg\theta_{j}$, a contradiction.\\
\end{proof}

\begin{lemma}{\label{independentformulas}}
 For each $n\in\omega$, there are $n$ formulas $\phi_{1},...,\phi_{n}$ in the language of arithmetic such that for no Boolean combination $C$ of any $n-1$ of them, $PA+C$ decides the remaining one.
\end{lemma}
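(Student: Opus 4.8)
The plan is to reduce the statement to the construction of $n$ sentences that are \emph{independent over} $PA$, meaning that for every truth assignment $\varepsilon\in\{0,1\}^{n}$ the theory $PA+\bigwedge_{i=1}^{n}\phi_{i}^{\varepsilon_{i}}$ is consistent (where $\phi^{1}:=\phi$ and $\phi^{0}:=\neg\phi$). I will obtain such sentences directly from Lemma~\ref{contradictoryindependentformulas}, applied not with $n$ but with $2^{n}$ in place of $n$. This yields $2^{n}$ mutually exclusive, non-refutable formulas, which I index by binary strings of length $n$ as $\{\theta_{s}:s\in\{0,1\}^{n}\}$; thus $PA\not\vdash\neg\theta_{s}$ for every $s$, and $PA\vdash\theta_{s}\to\neg\theta_{t}$ whenever $s\neq t$. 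For $i\in\{1,\dots,n\}$ I then set $\phi_{i}:=\bigvee_{s:\,s_{i}=1}\theta_{s}$, the disjunction of those $\theta_{s}$ whose $i$-th bit is $1$.

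The first key step is to check that these $\phi_{i}$ are independent. Fix $\varepsilon\in\{0,1\}^{n}$ and reason in $PA+\theta_{\varepsilon}$. If $\varepsilon_{i}=1$ then $\theta_{\varepsilon}$ is one of the disjuncts of $\phi_{i}$, so $PA\vdash\theta_{\varepsilon}\to\phi_{i}$; if $\varepsilon_{i}=0$ then every disjunct $\theta_{s}$ of $\phi_{i}$ satisfies $s\neq\varepsilon$, so mutual exclusivity gives $PA\vdash\theta_{\varepsilon}\to\neg\phi_{i}$. Hence $PA+\theta_{\varepsilon}\vdash\bigwedge_{i=1}^{n}\phi_{i}^{\varepsilon_{i}}$, and since $\theta_{\varepsilon}$ is non-refutable the theory $PA+\bigwedge_{i=1}^{n}\phi_{i}^{\varepsilon_{i}}$ is consistent. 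As $\varepsilon$ was arbitrary, all $2^{n}$ Boolean combinations of the $\phi_{i}$ are consistent with $PA$.

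It remains to deduce the statement of the lemma from independence. After relabelling I may assume the $n-1$ chosen formulas are $\phi_{1},\dots,\phi_{n-1}$ and the remaining one is $\phi_{n}$; let $C$ be a Boolean combination of $\phi_{1},\dots,\phi_{n-1}$ with $PA+C$ consistent (the case where $PA+C$ is already inconsistent is degenerate and is excluded). Pick a model $M\models PA+C$ and read off the truth values $\delta\in\{0,1\}^{n-1}$ of $\phi_{1},\dots,\phi_{n-1}$ in $M$; since $M\models C$, the propositional assignment $\delta$ satisfies $C$, so $\bigwedge_{i<n}x_{i}^{\delta_{i}}\to C$ is a propositional tautology and hence $PA\vdash\bigwedge_{i<n}\phi_{i}^{\delta_{i}}\to C$. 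By independence both $PA+\bigwedge_{i<n}\phi_{i}^{\delta_{i}}+\phi_{n}$ and $PA+\bigwedge_{i<n}\phi_{i}^{\delta_{i}}+\neg\phi_{n}$ are consistent, and both entail $C$; they therefore furnish models of $PA+C+\phi_{n}$ and of $PA+C+\neg\phi_{n}$, so $PA+C$ decides neither $\phi_{n}$ nor its negation.

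I expect the main obstacle to be this last paragraph rather than the construction: one must be careful that passing from ``some assignment satisfies $C$'' to ``$PA$ proves $C$ from the corresponding conjunction of literals'' is purely propositional and survives the substitution of the $\phi_{i}$ for the propositional variables, and one must decide how to treat the degenerate combinations $C$ for which $PA+C$ is inconsistent (which trivially decide everything); restricting to $C$ consistent with $PA$ is the natural reading. The construction itself is routine once Lemma~\ref{contradictoryindependentformulas} is invoked with $2^{n}$ formulas, the only point of care being the bookkeeping of the indexing of the $\theta_{s}$ by $\{0,1\}^{n}$.
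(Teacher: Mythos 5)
Your proposal is correct and follows essentially the same route as the paper: both apply Lemma~\ref{contradictoryindependentformulas} with $2^{n}$ mutually exclusive non-refutable sentences and define each $\phi_{i}$ as the disjunction of those $\theta_{s}$ whose index selects $i$ (your binary-string indexing is just a cleaner rendering of the paper's bijection with $\mathfrak{P}(\{1,\dots,n\})$), and both reduce the claim to the consistency of all $2^{n}$ full conjunctions of literals. Your final paragraph merely makes explicit the passage from that consistency statement to the lemma as stated, which the paper leaves to Lemma~\ref{allcombinationsconsistent} and the surrounding discussion, so no substantive difference remains.
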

\begin{proof}
Let $n\in\omega$. By Lemma \ref{contradictoryindependentformulas}, pick a set $S:=\{\theta_1,...,\theta_{2^{n}}\}$ of $2^{n}$ non-refutable, mutually exclusive formulas. We will construct
$\phi_{1},...,\phi_{n}$ as disjunctions $\bigvee{R}$ over subsets of $S$. By choice of the $\theta_{i}$, it is clear that $\theta_{i}\implies\bigvee{R}$ iff $\theta_{i}\in R$:
Clearly, if $\theta_{i}\in R$, then $\theta_{i}\implies\bigvee{R}$; on the other hand, $\theta_{i}$ implies $\neg\theta_{j}$ for all $j\neq i$, so $\theta_{i}\implies\neg\bigvee{R}$
if $\theta_{i}\notin R$.\\
Let $f$ be some bijection between $\mathfrak{P}(\{1,2,...,n\})$ and $S$. We proceed to define subsets $S_{1},...,S_{n}$ of $S$ as follows: We put $\theta_{i}$ in $S_{j}$
iff $j\in f^{-1}(\theta_{j})$. Hence each subset of $\{1,2,...,n\}$ is `marked' as the set of $j$ for which $S_{j}$ contains a particular $\theta_{i}$.
Set $\phi_{i}:=\bigvee{S_{j}}$. We claim that $\{\phi_{i}|1\leq i\leq n\}$ is as desired.\\
To see this, consider a combination $\bigwedge_{i=1}^{n}\delta_{i}\phi_{i}$ where each $\delta_{i}$ is either $\neg$ or nothing (i.e. each $\phi_{i}$ appears once, either plain or negated).
Then $E:=\{i|1\leq i\leq n\wedge \delta_{i}\neq\neg\}$ is a subset of $\{1,...,n\}$. Let $\theta_{j}=f(E)$. Then by what we just observed, $\theta_{j}$ implies all elements of $E$
and implies the negation of all elements of $S\setminus E$. Hence $\theta_{j}$ implies $\bigwedge_{i=1}^{n}\delta_{i}\phi_{i}$ (and this implication is provable in $PA$). 
Now, if $PA+\bigwedge_{i=1}^{n}\delta_{i}\phi_{i}$ was inconsistent, so was $PA+\theta_{j}$. But then, $PA$ would prove $\neg\theta_{j}$, contradicting the choice of $\theta_{j}$.
Hence $PA+\bigwedge_{i=1}^{n}\delta_{i}\phi_{i}$ is consistent. As $\bigwedge_{i=1}^{n}\delta_{i}\phi_{i}$ was arbitrary, $\{\phi_{1},..,\phi_{n}\}$ is indeed as desired.
\end{proof}

\textbf{Remark}: This is a generalization of a construction for the case $n=2$ given in \cite{P} (p. $19$), there attributed to E. Jerabek.

\begin{defini}
 A set $S$ of closed $\mathcal{L}_{PA}$-formulas is independent iff for no finite $S^{\prime}\subseteq S$, $\phi\in S\setminus S^{\prime}$ and no Boolean combination $C$ of $S^{\prime}$,
$PA+C$ decides $\phi$.
\end{defini}

\begin{lemma}{\label{allcombinationsconsistent}}
 If $S$ is a finite set of closed $\mathcal{L}_{PA}$ formulas, then $S$ is and independent over $PA$, iff for every Boolean combination $C$ of the elements of $S$ (conjunction in which each element of $S$ appears once, either plain or negated),
$PA+C$ is consistent (provided $PA$ is consistent).
\end{lemma}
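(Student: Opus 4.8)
The plan is to prove both implications by reducing every Boolean combination of elements of $S$ to its disjunctive normal form over the full conjunctions. The bridge between the propositional and the arithmetic level is the elementary fact that a propositional tautology in the ``atoms'' $\phi_1,\dots,\phi_k$ becomes a theorem of $PA$ once the atoms are read as $\mathcal{L}_{PA}$-sentences; hence, if a Boolean combination $B$ of the $\phi_i$ is propositionally equivalent to a disjunction $\bigvee_j C_j$ of full conjunctions, then $PA\vdash B\leftrightarrow\bigvee_j C_j$. From this I read off the key equivalence used throughout: $PA+B$ is consistent if and only if $PA+C_j$ is consistent for at least one disjunct $C_j$, because $PA\vdash\neg B$ iff $PA\vdash\bigwedge_j\neg C_j$ iff $PA\vdash\neg C_j$ for every $j$.

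For the direction ``independent $\Rightarrow$ every full conjunction consistent'' I would argue by contraposition. Suppose some full conjunction is $PA$-inconsistent; among all $PA$-inconsistent conjunctions of literals over distinct members of $S$, choose one $D_0=\ell_1\wedge\dots\wedge\ell_m$ of minimal length. Since $PA$ is consistent, the empty conjunction is not inconsistent, so $m\geq 1$; writing $D_0=D_1\wedge\ell_m$, minimality forces $PA+D_1$ to be consistent, while $PA\vdash\neg(D_1\wedge\ell_m)$ gives $PA+D_1\vdash\neg\ell_m$. Thus the consistent theory $PA+D_1$ decides the formula underlying $\ell_m$, which does not occur in $D_1$, witnessing a failure of independence.

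For the converse I assume every full conjunction over $S$ is $PA$-consistent and check independence directly. Fix a proper subset $S'\subsetneq S$, a formula $\phi\in S\setminus S'$, and a Boolean combination $D$ of $S'$ with $PA+D$ consistent; I must show that $PA+D$ proves neither $\phi$ nor $\neg\phi$. Consistency of $PA+D$ rules out $D$ being propositionally contradictory, so some truth assignment to the members of $S'$ satisfies $D$; because $\phi\notin S'$, this assignment extends to a full assignment making $\phi$ true and to another making $\phi$ false. Reading these as full conjunctions $C^{+}$ and $C^{-}$, the underlying propositional implications give $PA\vdash C^{+}\to(D\wedge\phi)$ and $PA\vdash C^{-}\to(D\wedge\neg\phi)$, so by hypothesis $PA+C^{+}$ and $PA+C^{-}$ are consistent, whence $PA+D+\phi$ and $PA+D+\neg\phi$ are consistent and $\phi$ is left undecided.

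The step I expect to be delicate is pinning down the correct reading of ``decides'', i.e. ensuring that the degenerate case does not trivialize the statement. If one allowed a propositionally contradictory $D$ (for instance $\psi\wedge\neg\psi$ with $\psi\in S'$), then $PA+D$ would be inconsistent and would ``decide'' everything, so no $S$ with at least two elements could be independent. The intended notion must therefore presuppose that the deciding theory $PA+C$ is consistent; phrasing non-decision as ``$PA+D+\phi$ and $PA+D+\neg\phi$ are both consistent'' builds this in automatically, and it is exactly what makes both the minimal-length argument and the extension-of-assignments argument succeed. The remaining ingredients, namely provability of propositional tautologies in $PA$ and the passage between consistency of a disjunction and consistency of its disjuncts, are routine.
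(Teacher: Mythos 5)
Your proof is correct and in fact does strictly more than the paper's. The paper argues only the forward direction (independence implies consistency of every full conjunction), in one line: if $C=\psi_1\wedge\dots\wedge\psi_n$ is inconsistent, then $PA$ plus the first $n-1$ literals proves $\neg\psi_n$ and hence ``decides'' $\phi_n$. Your contrapositive argument is the same idea, but your minimal-length refinement additionally guarantees that the deciding theory $PA+D_1$ is \emph{consistent}; this matters precisely because of the definitional ambiguity you flag, since under the reading of ``decides'' that presupposes a consistent deciding theory the paper's one-liner has a gap (nothing stops $PA+\psi_1\wedge\dots\wedge\psi_{n-1}$ from being inconsistent) which your minimality argument closes. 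The converse direction (consistency of all full conjunctions implies independence) is asserted in the lemma but not proved in the paper at all; your argument for it --- extend a satisfying assignment of $D$ to full assignments making $\phi$ true and making it false, then pull consistency of $PA+D+\phi$ and $PA+D+\neg\phi$ back from the corresponding full conjunctions --- is the natural and correct way to supply it. The only cost of your approach is length; what it buys is an actual proof of the stated biconditional and a clean resolution of the degenerate case that the paper leaves implicit.
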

\begin{proof}
 If some combination $C$ was inconsistent and $\psi_1,...,\psi_{n-1}$ were the first $n-1$ conjuncts of $C$ (i.e. $\phi_1,...,\phi_{n-1}$, either plain or negated), then $\phi_{n}$
would be decided by $\psi_1,...,\psi_{n-1}$, contradicting the assumption of independence.
\end{proof}

\begin{thm}{\label{maintheorem}}
Every always decidable formula is either tautological or contradictory, i.e.: Let $A(x_{1},...,x_{n})$ be a propositional formula in $n$ propositional variables $x_1,...,x_n$. Assume that for each $n$-tuple of $\mathcal{L}_{PA}$-formulas without free variables $(\phi_{1},...,\phi_{n})$,
we have that $PA$ decides $A(\phi_{1},...,\phi_{n})$ (i.e. $PA$ either proves the sentence or refutes it). Then $A$ is either a tautology or contradictory.
\end{thm}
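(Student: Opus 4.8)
The plan is to argue by contraposition: assuming $A$ is neither a tautology nor contradictory, I would exhibit a tuple $(\phi_1,\dots,\phi_n)$ for which $PA$ fails to decide $A(\phi_1,\dots,\phi_n)$. The starting observation is that $A(\phi_1,\dots,\phi_n)$ is a single $\mathcal{L}_{PA}$-sentence built from the $\phi_i$ by propositional connectives, so its truth value in any model $M\models PA$ is completely determined, via the truth table of $A$, by the truth values of $\phi_1,\dots,\phi_n$ in $M$. Thus if I can produce models of $PA$ realizing prescribed truth patterns among the $\phi_i$, I thereby control the truth value of $A(\phi_1,\dots,\phi_n)$ in those models.

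Concretely, since $A$ is not contradictory there is a truth assignment $\alpha\colon\{x_1,\dots,x_n\}\to\{0,1\}$ with $A(\alpha)=1$, and since $A$ is not a tautology there is an assignment $\beta$ with $A(\beta)=0$. Next I would invoke Lemma \ref{independentformulas} to fix an independent set $\{\phi_1,\dots,\phi_n\}$, and then Lemma \ref{allcombinationsconsistent} to conclude that for every full Boolean combination $\bigwedge_{i=1}^{n}\delta_i\phi_i$ (each $\phi_i$ appearing once, plain or negated) the theory $PA+\bigwedge_{i=1}^{n}\delta_i\phi_i$ is consistent. Applying this to the combination dictated by $\alpha$ yields a model $M_\alpha\models PA$ in which $\phi_i$ holds iff $\alpha(x_i)=1$; similarly $\beta$ yields a model $M_\beta$. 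Then $A(\phi_1,\dots,\phi_n)$ evaluates in $M_\alpha$ to $A(\alpha)=1$ and in $M_\beta$ to $A(\beta)=0$.

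Now I would use the hypothesis that $PA$ decides $A(\phi_1,\dots,\phi_n)$ and split into two cases. If $PA\vdash A(\phi_1,\dots,\phi_n)$, then $M_\beta\models A(\phi_1,\dots,\phi_n)$, contradicting that this sentence is false in $M_\beta$. If instead $PA\vdash\neg A(\phi_1,\dots,\phi_n)$, then $M_\alpha\models\neg A(\phi_1,\dots,\phi_n)$, contradicting that the sentence is true in $M_\alpha$. Either way we reach a contradiction, so the assumption that $A$ is neither tautological nor contradictory is untenable, which proves the theorem.

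As for where the difficulty lies, essentially all of the real work has already been discharged into Lemmas \ref{independentformulas} and \ref{allcombinationsconsistent} (resting in turn on Lemma \ref{contradictoryindependentformulas}), which manufacture the independent family realizing every truth pattern over $PA$. Given those, the remaining argument is a routine semantic translation. The one point deserving care is the very first observation---that provability of $A(\phi_1,\dots,\phi_n)$ forces its truth in every model of $PA$, and hence, tested against a suitably chosen model, constrains the truth table of $A$---but this is immediate from the compositional evaluation of propositional combinations of sentences.
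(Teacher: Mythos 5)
Your proposal is correct and follows essentially the same route as the paper: both fix satisfying and falsifying assignments, invoke Lemma \ref{independentformulas} and Lemma \ref{allcombinationsconsistent} to get that $PA$ plus each corresponding full Boolean combination is consistent, and conclude that $A(\phi_1,\dots,\phi_n)$ can be neither proved nor refuted. The only cosmetic difference is that you pass to models $M_\alpha$, $M_\beta$ via completeness, whereas the paper argues directly with the consistency of the theories $PA+C_1$ and $PA+C_2$.
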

\begin{proof}
 Write $A$ in disjunctive normal form. Supppose $A$ is neither tautological nor contradictory. Let $B_{1}:\{x_1,...,x_n\}\rightarrow\{0,1\}$ be an assignment of truth values to the proposition variables
that makes $A$ true and $B_{2}:\{x_{1},...,x_{n}\}\rightarrow\{0,1\}$ another one that makes it false. By Lemma \ref{independentformulas}, let $\{\phi_1,...,\phi_n\}$ be an independent set of $\mathcal{L}_{PA}$-formulas of
cardinality $n$. Let $C_1$ and $C_2$ be the Boolean combinations corresponding to $B_1$ and $B_2$, respectively. Then $PA+C_1$ and $PA+C_2$ are both consistent by Lemma \ref{allcombinationsconsistent}; however,
in $PA+C_1$, $A(\phi_1,...,\phi_{n})$ is true and in $PA+C_2$, $A(\phi_1,...,\phi_{n})$ is false. Hence $A(\phi_1,...,\phi_{n})$ is not decidable in $PA$, contradicting the assumption.
\end{proof}


\section{Algorithmical Decidability of Propositional Forms}

We ask a question analogous to that of the preceeding section, where decidability is now taken to mean decidability by a Turing machine:
Suppose that for some propositional formula $A(x_1,...,x_n)$, there is
a Turing program $P$ such that $P([\phi_{1}],...,[\phi_{n}])\downarrow=1$ iff $\mathbb{N}\models A(\phi_{1},...,\phi_{n})$ and otherwise $P([\phi_{1}],...,[\phi_{n}])\downarrow=0$, does it follow
that the truth value of $A(\phi_{1},...,\phi_{n})$ is independent of $\phi_1,...,\phi_{n}$ and hence that $A$ is tautological or contradictory?
It turns out that the answer is yes:

\begin{thm}{\label{TuringDec}}
Let $A$ be a propositional form and let $P$ be a Turing program such that $P([\phi_{1}],...,[\phi_{n}])\downarrow=1$ iff $\mathbb{N}\models A(\phi_{1},...,\phi_{n})$ and otherwise $P([\phi_{1}],...,[\phi_{n}])\downarrow=0$.
Then $A$ is tautological or contradictory.
\end{thm}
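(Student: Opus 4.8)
The plan is to argue by contraposition: assuming $A$ is neither a tautology nor a contradiction, I would manufacture from $P$ a decision procedure for the (G\"odel-coded) set of true arithmetic sentences, which is impossible. The bridge between the propositional and the arithmetical level is the observation already implicit in Section~1: for any sentences $\phi_1,\dots,\phi_n$, the truth value of $\N\models A(\phi_1,\dots,\phi_n)$ equals the propositional value of $A$ at the assignment $(t_1,\dots,t_n)$ with $t_i=1$ iff $\N\models\phi_i$. Thus $P$ is in effect computing the propositional value of $A$ applied to the (highly non-computable) truth values of its inputs. Since $A$ is neither tautological nor contradictory, the induced Boolean function on $\{0,1\}^n$ is non-constant, and because the cube is connected under single-coordinate flips there must be an index $i$ and fixed bits $b_j$ for $j\neq i$ such that flipping the $i$-th coordinate changes the value of $A$; equivalently, with the other coordinates fixed to $\vec b$, the map $b_i\mapsto A(\dots)$ is either the identity or negation.

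Next I would realize the fixed coordinates by concrete sentences. Put $\top:=(0=0)$ and $\bot:=\neg(0=0)$, so that $\N\models\top$ and $\N\not\models\bot$, and for each $j\neq i$ let $\psi_j$ be $\top$ if $b_j=1$ and $\bot$ otherwise. Hard-coding the G\"odel numbers $[\psi_j]$ into $P$ at all coordinates except the $i$-th yields a total computable function $Q$ with $Q([\phi])=P([\psi_1],\dots,[\phi],\dots,[\psi_n])$, the $[\phi]$ sitting in the $i$-th slot. By the bridge observation and the choice of the $\psi_j$, for every sentence $\phi$ we have $Q([\phi])\downarrow$, and $Q([\phi])=1$ iff $\N\models A(\psi_1,\dots,\phi,\dots,\psi_n)$, which by the restriction is equivalent to $\N\models\phi$ in the identity case and to $\N\not\models\phi$ in the negation case.

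Finally I would derive the contradiction. In the identity case $Q$ decides membership in the set of true arithmetical sentences $\mathrm{Th}(\N)=\{[\phi]:\N\models\phi\}$; in the negation case it decides its complement. Either way, since $Q$ is total and computable, $\mathrm{Th}(\N)$ would be decidable, contradicting Tarski's undefinability theorem, or already the weaker classical fact that true arithmetic is undecidable (e.g.\ via a $\Sigma_1$-reduction of the halting problem). This contradiction establishes that $A$ must be a tautology or contradiction.

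The step requiring the most care is the extraction of the "sensitive edge": one must verify that the restricted function of the single free variable is \emph{exactly} the identity or the negation, and treat both orientations, since only then does $Q$ compute arithmetical truth (or its complement) on the nose. The remaining ingredients---the availability of explicit true and false sentences, and the closure of the computable functions under hard-coding finitely many inputs---are routine and need no detailed verification.
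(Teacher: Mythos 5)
Your proof is correct, but it takes a genuinely different route from the paper's. The paper reduces Theorem \ref{TuringDec} to Theorem \ref{maintheorem}: it forms the recursive theory $T=PA+\{S_{(\phi_1,\dots,\phi_n)}\}$ asserting that $P$ answers correctly on every tuple, observes that $T$ is consistent (all its axioms hold in $\N$) and decides every instance $A(\phi_1,\dots,\phi_n)$ (since $PA$ proves each true halting fact by $\Sigma_1$-completeness, and $T$ then knows the output is the truth value), and invokes Theorem \ref{maintheorem} in its version for recursive extensions of $PA$. You instead argue directly: satisfaction in $\N$ commutes with the propositional connectives, so $P$ computes the Boolean function of $A$ applied to the truth values of its inputs; since that function is non-constant you extract a sensitive edge of the cube, freeze the other coordinates with the explicit sentences $0=0$ and $\neg(0=0)$, and turn $P$ into a total computable decision procedure for $\mathrm{Th}(\N)$ or its complement, contradicting the undecidability of true arithmetic. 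Your argument is self-contained and bypasses all of Section 1 --- in particular the G\"odel fixpoint construction of mutually exclusive non-refutable sentences --- at the price of invoking the classical undecidability of $\mathrm{Th}(\N)$; the paper's route buys a uniform treatment, exhibiting the algorithmic theorem as an instance of the proof-theoretic one for a suitable recursive theory. Both of your key steps (the sensitive-edge extraction with its two orientations, and the hard-coding of inputs) are handled correctly; the only point worth stating explicitly is that $Q$ decides $\mathrm{Th}(\N)$ as a subset of the decidable set of G\"odel numbers of sentences, which is precisely the set whose undecidability the classical theorem asserts.
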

\begin{proof}
Assume that $P$ is such a program for a propositional formula $A$. We build a recursive extension $T$ of $PA$ that can roughly be stated as $PA+`P$ is always right'. 
As `$P$ is always right' is true by assumption, $T$ is consistent.
$T$ consists of $PA$ together with the sentence 
$S_{(\phi_{1},..,\phi_{n})}:=(A(\phi_{1},...,\phi_{n})\rightarrow P([\phi_{1}],...,[\phi_{n}])\downarrow=1)\wedge (\neg A(\phi_{1},...,\phi_{n})\rightarrow P([\phi_{1}],...,[\phi_{n}])\downarrow=0)$
for every $n$-tuple $(\phi_{1},...,\phi_{n})$ of closed formulas. Clearly, $T$ is recursive.\\
Now, as, by assumption, $P$ halts with output $0$ or $1$ on every $n$-tuple $(\phi_1,...,\phi_n)$ of closed formulas, $PA$ will prove this for every single instance;
moreoever, $T$ will, via the extra assumptions, know that $P$ decides correctly and hence decide $A(\phi_1,...,\phi_{n})$ for every such $n$-tuple. As Theorem \ref{maintheorem}
is valid for recursive extensions of $PA$, it is valid for $T$, so $A$ is either a tautology or contradictory, as desired.
\end{proof}



\end{document}